\documentclass{rsepublic}
\Year{2016}
\Volume{xx}
\Paper{xx}
\Pagerange{\pageref{firstpage}--\pageref{lastpage}}
\setcounter{page}{1}

\usepackage{amsmath}
  \usepackage{paralist}
  \usepackage{graphics} 
  \usepackage{epsfig} 
\usepackage{graphicx}  \usepackage{epstopdf}
 \usepackage[colorlinks=true]{hyperref}
\hypersetup{urlcolor=blue, citecolor=red}
\usepackage{hyperref}
\usepackage{marginnote}

\theoremstyle{definition}

\newtheorem*{remark}{Remark}

\newcommand{\brackets}[1]{\left( #1 \right)}
\newcommand{\ip}[2]{\left\langle #1, #2 \right \rangle}
\newcommand{\xhat}{\hat{{\bf x}}}
\newcommand{\yhat}{\hat{{\bf y}}}
\newcommand{\zhat}{\hat{{\bf z}}}
\newcommand{\R}{\mathbb{R}}

\newcommand{\Dop}[0]{\mathcal{D}}
\newcommand{\norm}[1]{\left\| #1  \right\|} 
\renewcommand{\d}[1]{\,  \mathrm{d} #1} 
\newcommand{\abs}[1]{\left| #1 \right|} 
\renewcommand{\v}[1]{\ensuremath{\mathbf{#1}}} 
\newcommand{\half}[0]{\tfrac{1}{2}}
\newcommand{\sech}{\operatorname{sech}}

\newcommand{\uv}[1]{\ensuremath{\hat{\mathbf{#1}}}}
\newcommand{\beq}[0]{\begin{equation}}
\newcommand{\eeq}[0]{\end{equation}}
\newcommand{\pd}[2]{\frac{\partial #1}{\partial #2}} 
\newcommand{\pdd}[2]{\frac{\partial^2 #1}{\partial #2^2}}
\newcommand{\avg}[1]{\left< #1 \right>} 
\newcommand{\e}{\operatorname{e}}
\newcommand{\weak}{\rightharpoonup}

\newcommand{\ddbydd}[2]{\frac{\mathrm{d}^2 #1}{\mathrm{d} #2^2}} 

\newcommand{\pola}{\psi}
\newcommand{\azim}{\beta}

\begin{document}
\title[Travelling waves in ferromagnetic nanowires]
      {Existence of travelling-wave solutions representing domain wall motion in a thin ferromagnetic nanowire}
\author[R. G. Lund, J. M. Robbins and V. Slastikov]{\textbf{Ross G. Lund$^1$, J. M. Robbins$^2$ and  Valeriy Slastikov$^2$}
\\
$^1$Department of Mathematical Sciences, New Jersey Institute of Technology, University Heights, Newark, NJ 07102, USA. \\$^2$University of Bristol School of Mathematics, University Walk, Bristol BS8 1TW, UK.}
\MSdates[?Received date?]{?Accepted date?}
\label{firstpage}
\maketitle

\begin{abstract}
We study the dynamics of a domain wall under the influence of applied magnetic fields in a one-dimensional ferromagnetic nanowire, governed by the Landau--Lifshitz--Gilbert equation. Existence of travelling-wave solutions close to two known static solutions is proven using implicit-function-theorem-type arguments. 
\end{abstract}

\section{Introduction}

The subject of domain wall motion in ferromagnetic nanowires has attracted a great deal of interest in recent years, from both the physics and applied mathematics communities. This is, in part, due to proposed magnetic storage media such as the so-called racetrack memory \cite{Parkin}. This device makes use of the fact that, in a thin ferromagnetic nanowire, the magnetic domains (regions of uniform magnetization, which are separated by small transition layers called \emph{domain walls}) prefer to align along the wire axis (in either direction), thus providing a two-state system which can be used to represent information. In order to read or write information, the domains are propagated through the wire by application of a magnetic field or an electric current.

In order to understand the workings of such a device, the primary problem to study is the dynamics of a single domain wall separating two domains of opposite magnetization, under the influence of an applied magnetic field. The magnetization dynamics is modelled by the Landau--Lifshitz--Gilbert (LLG) equation \cite{LL, Gilbert}. 

 An ansatz method that has been successful in approximately describing one-dimensional domain wall motion  was introduced by Schryer and Walker \cite{Walker},  generalized by Malozemoff and Slonczewski \cite{Slonczewski},  and applied to various cases  by a number of authors (see, for example, \cite{Thiaville05, Mougin07, Bryan08, Yang08, Wang09a, Wang09b, Lu10, Tretiakov10, Tretiakov12}). This approach consists of looking for approximate solutions belonging to a three-parameter family of profiles, in which the parameters represent translations, rotations and re-scalings of a suitably chosen static profile.  The time dependence of the parameters is determined so as to satisfy the equations of motion as nearly as possible. A recent asymptotic analysis \cite{PRSA} provides  a systematic foundation for this approach.

The phenomenology of solution behaviours in this problem is reasonably well understood by applying the approximate methods discussed above. Exact solutions are obtained only in some special cases \cite{Walker, Prec}, including a travelling-wave (TW) solution known as the Walker solution. 
For values of the driving field (that is, the component of the applied field along the wire axis) up to a certain critical value, which depends on other physical parameters (such as the transverse applied field components and the anisotropy coefficients of the nanowire), the velocity and width of the domain wall are asymptotically constant in time: the behaviour is apparently that of a TW. For fields above the critical value, oscillatory motion sets in: the domain wall precesses about the wire axis, and its velocity and width oscillate in time. This phenomenon is known in the literature as Walker breakdown, due to its discovery in the specific case studied by Schryer and Walker \cite{Walker}.  An approximate description of these transitions in the general 1D problem, including predictions of the critical fields, has been provided elsewhere \cite{PRSA, PRB}.

Notwithstanding the success of these approximate methods, rigorous results concerning domain wall motion in nanowires are currently few \cite{Carbou1, Carbou2}.  In this article, we analyze TW solutions of the LLG equation in thin nanowires. We consider nanowires of biaxial anisotropy, with easy axis along the wire, subject to a uniform magnetic field with components along and transverse to the wire, and prove that TW solutions exist provided that the magnetic field component along the wire is sufficiently small.  The proof is an application  of the Implicit Function Theorem.
 
 \subsection{Domain wall motion in a thin nanowire: statement of the problem}

We work in the continuum-mechanical theory of micromagnetics (see e.g.~\cite{Hubert}). The equation governing magnetization dynamics is the LLG equation. We study the following dimensionless form:
\beq
 {\uv{M}}_t + \alpha \uv{M} \times {\uv{M}}_t=
\uv{M}\times \v{H}(\uv{M}),
\label{LLG}
\eeq
where $\uv{M} \in S^2$ is the magnetization, $\alpha > 0$ is the Gilbert damping constant (typical value in the range 0.01--0.2), and $\v{H}(\uv{M})$ is the effective magnetic field.   We study the LLG equation in a \emph{thin} cylindrical magnetic nanowire, $\Omega \subset \R^3$; `thin' meaning that the radius of the wire is small in comparison to both the length of the wire and the magnetic exchange length of the material. We work in scaled units such that the exchange length is equal to 1, and take the length of the wire to be infinite. We use Cartesian coordinates on $\Omega$: $x$ for the coordinate along the wire axis, and $y,z$ for the coordinates on the cross-section.

 In the thin-wire regime it is known that the micromagnetic energy $\Gamma$-converges to a one-dimensional energy \cite{Kuhn, Slasberg}, given in dimensionless form by
\beq
E(\uv{M}) = \frac{1}{2} \int_\R |\uv{M}_x|^2 \d x + \frac12 \int_\R \left( 1 - (\uv{M}\cdot \xhat)^2 + K_2(\uv{M}\cdot \yhat)^2 \right) \d x,
\label{Energy}
\eeq
where $ K_2\geq 0$ is the effective hard-axis anisotropy coefficient and $\xhat, \yhat$ and $\zhat$ are the usual Cartesian basis vectors. 
The magnetization vector depends only on the axial coordinate, $x\in \R$, and time, $t\in \R^+$, i.e. $\uv{M}=\uv{M}(x,t)$. 
The effective field is given by
\beq
\v{H}(\uv{M})=-\frac{\delta E}{\delta \uv{M}} + \v{H}_a = \uv{M}_{xx} + (\uv{M}\cdot \xhat)\xhat - K_2(\uv{M}\cdot \yhat)\yhat + \v{H}_a,
\label{EffectiveField}
\eeq
where $\v{H}_a=H_1 \xhat +H_2 \yhat +H_3 \zhat \in \R^3$ is a uniform applied magnetic field. For convenience, we denote the parameters of the system, namely the hard-axis anisotropy $K_2$ and applied magnetic field $\v{H}_a$, collectively by 
\beq
\Lambda = (H_1,H_2,H_3,K_2).
\eeq
We note that the effective field may also be written as 
\beq
\v{H}(\uv{M}) = \uv{M}_{xx} - \nabla_{\uv{M}} U^\Lambda(\uv{M}),
\eeq
where the potential $U^\Lambda$ is given by
\beq
U^\Lambda(\uv{M})= \frac12 \brackets{1 - (\uv{M}\cdot \xhat)^2 + K_2(\uv{M}\cdot \yhat)^2 - 2 \v{H}_a \cdot \uv{M}}.
\label{Potential}
\eeq

As we are interested in domain wall dynamics, we impose the boundary conditions
\beq
\lim_{x \to \pm \infty} \uv{M} = \uv{m}^\Lambda_\pm,
\label{BC}
\eeq
where $\uv{m}^\Lambda_\pm$ correspond to distinct local minima of $U^\Lambda$  whose projections along $\xhat$ have opposite signs.
For definiteness, we take
\beq
\xhat\cdot\uv{m}^\Lambda_+  > 0, \quad \xhat\cdot\uv{m}^\Lambda_- < 0,
\label{tailtotail}
\eeq
which corresponds to `tail-to-tail' domains (`head-to-head' domains may be treated similarly).

It will be convenient to introduce polar coordinates  $(\pola(\xi), \azim(\xi))$ for the magnetization, writing 
\beq
\uv{m}(\pola, \azim) = (\sin\pola \cos\azim, \cos\pola, \sin\pola \sin\azim),
\label{PolarCoordinates}
\eeq
and letting $(\pola^\Lambda_\pm,  \azim^\Lambda_\pm)$ denote the polar coordinates of the boundary values $\uv{m}^\Lambda_\pm$. 
We note that we have taken the polar axis along the hard axis $\yhat$ rather than,  as is often done, the easy axis $\xhat$.
This is because the  profiles of interest do not take values near the hard axis, which is energetically costly, so that their  representation in terms of $(\pola, \azim)$ avoids the coordinate singularities at  $\pola = 0$ and $\pola = \pi$.


\subsection{Overview}
The remainder of the paper is structured as follows: In \S2 we derive the system of ODEs  satisfied by TWs and identify static (zero velocity) solutions for two subsets of parameters in which the component of the applied field along the wire vanishes, namely 
 i)  $K_2 >0, \v{H}_a = 0$, corresponding to biaxial anisotropy and  vanishing applied field;  and ii) $K_2 = H_1=0 < H_2^2 +  H_3^2 <1$, corresponding to uniaxial anisotropy and nonvanishing transverse applied field.
In \S3 we reformulate the TW equations as a map between Banach spaces whose zeros correspond to solutions of the TW equations. 
In \S4 we state and prove our main results, which establish existence of TW solutions to the LLG equation \eqref{LLG} for values of the parameters in neighbourhoods of the  subsets i) and ii).


\section{Travelling Waves.}
We look for TW solutions of the LLG equation \eqref{LLG} of the form
\beq
\uv{M}(x,t) = \uv{m}(x-Vt),
\label{Ansatz}
\eeq
where $V$ denotes the velocity of the TW.  Substituting \eqref{Ansatz} into \eqref{LLG} and imposing the boundary conditions \eqref{BC}, we obtain the TW equation
\beq
V\uv{m}'+\alpha V \uv{m}\times \uv{m}' + \uv{m} \times \v{H}(\uv{m})=0, \quad \lim_{\xi \to \pm  \infty}{\uv{m}}(\xi) = \uv{m}^\Lambda_{\pm},
\label{TWEq}
\eeq
a system of second-order ODEs, where we have introduced the travelling coordinate $\xi:=x-Vt$.  We note that its solutions  are determined up to translation; that is, if $\uv{m}(\xi)$ satisfies \eqref{TWEq}, so does $\uv{m}(\xi - \xi_0)$.


\subsection{Static solutions.} 

Given a solution $\uv{m}$ of the TW equation \eqref{TWEq}, one can derive the following identity for the velocity \cite{PRB}:
\beq
\label{V_identity}
 V = \frac{U^\Lambda(\uv{m}^\Lambda_-) - U^\Lambda(\uv{m}^\Lambda_+)} {\alpha \int_\R \uv{m}' \cdot \uv{m}' }.
\eeq
(Formally,  \eqref{V_identity} is obtained by noting that the TW equation \eqref{TWEq} implies that $\alpha V \uv{m}' \cdot \uv{m}'$ is equal to $\left( \half  \uv{m}' \cdot \uv{m}' - 
U^\Lambda(\uv{m})\right)'$.)
Therefore, if the stable domain orientations $\uv{m}^\Lambda_+$ and $\uv{m}^\Lambda_-$ have same energy, then
the velocity  vanishes -- equivalently, there may exist a static solution of the TW equation.  We always have that $U^\Lambda(\uv{m}^\Lambda_- ) =U^\Lambda(\uv{m}^\Lambda_+)$ whenever $H_1 = 0$, i.e.~the component of the applied field along the wire vanishes, since in this case, the potential $U^\Lambda(\uv{m})$ is symmetric under the reflection $(m_1,m_2,m_3) \rightarrow (-m_1, m_2, m_3)$.

We can find explicit static solutions in two parameter regimes with $H_1 = 0$.
The first is the case of vanishing transverse applied field, which we denote by
\beq
\Lambda_W=(0,0,0,K_2), \quad K_2 > 0.
\label{Anisotropy}
\eeq
Allowing $H_1 \neq 0$, this encompasses the regime of the Walker solution. We note that, in this regime, we have restricted to positive $K_2$ since the case of $K_2=H_2=H_3=0$ is degenerate and our results do not apply there; though an explicit dynamic solution to \eqref{LLG} is available which is not a simple travelling wave \cite{Prec}. 

The static profile $\uv{m}_W$, for $\Lambda=\Lambda_W$, is given by the usual Bloch domain wall, 
\beq
\uv{m}_W(\xi) = (\tanh \xi, 0, \sech \xi).
\label{StaticStar}
\eeq
The polar representation of the static profile \eqref{StaticStar} is given by
\beq
\pola_W(\xi) = \frac{\pi}{2},\quad \azim_W(\xi) = 2\tan^{-1}(\e^{-\xi}). 
\eeq
It is easily shown that $\uv{m}_W$ corresponds to a minimizer of the micromagnetic energy \eqref{Energy}. 
A second minimizer, $(\tanh \xi, 0, - \sech \xi)$, may be obtained by reflection through the intermediate axis $\zhat$.  There exist another pair of solutions of \eqref{TWEq} given by $(\tanh \xi, \pm \sech \xi, 0)$, which correspond to critical points 
of the energy, in which the domain wall is aligned along the hard axis $\yhat$.  As they are energetically unstable, we do not consider them further.  

The second regime corresponds to vanishing hard-axis anisotropy and nonvanishing transverse field, and is denoted
\beq
\Lambda_T=(0,H_2,H_3,0), \quad 0<H_2^2+H_3^2<1.
\label{TransverseField}
\eeq
In view of the rotational symmetry about the easy axis, we may assume $H_2=0$. In terms of the polar coordinates $(\pola,\azim)$,  
the static profile is given by the solution of the first-order equation
\beq
\pola_T = \frac{\pi}{2}, \quad \azim_T' = H_3 - \sin \azim_T,
\label{StaticZero}
\eeq
with boundary conditions
\beq \label{eq: StaticZero bcs}
\lim_{\xi\to -\infty} \beta_T = \pi - \sin^{-1}H_3, \quad \lim_{\xi\to \infty} \beta_T = \sin^{-1}H_3.
\eeq
One may explicitly compute 
$\azim_T$ 
(see \cite{PRB}), but the expression is rather long and in any case will not be required in what follows.   We will only need the fact that $\sin \azim_T \geq H_3$ so that $ \azim_T $ is strictly decreasing.


\section{Travelling-wave equations as a map between Banach spaces}

Let $\Lambda_*$ denote either of the static parameters $\Lambda_W$ or $\Lambda_T$, and let $(\pola_*,\azim_*)$ denote the corresponding static solutions. We introduce reference profiles $(\pola^\Lambda, \azim^\Lambda)$ that coincide with  $(\pola_*,\azim_*)$ for $\Lambda = \Lambda_*$ as follows: 
\begin{align}
\pola^\Lambda(\xi) &:= \pola_*(\xi) + \Theta(\xi) (\pola_+^\Lambda - \pola_+^{\Lambda_*}) + \Theta(-\xi) (\pola_-^\Lambda - \pola_-^{\Lambda_*}),\nonumber\\
\azim^\Lambda(\xi) &:= \beta_*(\xi) + \Theta(\xi) (\beta_+^\Lambda - \beta_+^{\Lambda_*}) + \Theta(-\xi) (\beta_-^\Lambda - \beta_-^{\Lambda_*}).
\label{ReferenceConfiguration}
\end{align}
Here, $\Theta(\xi)$ is a switching function with the following properties: i) $\Theta(\xi) = 0$  for $\xi < 0$, ii)  $\Theta(\xi) = 1$  for $\xi >  \xi_0$ for some $\xi_0 > 0$, and iii) on the interval $(0,\infty)$, $1-\Theta$ belongs to  $H^2((0,\infty))$. It is then straightforward to check that $\lim_{\xi\pm \infty} (\pola^\Lambda(\xi), \azim^\Lambda(\xi)) =
(\pola^\Lambda_\pm, \azim^\Lambda_\pm)$ and that ${\pola^\Lambda}'', {\azim^\Lambda}'' \in L^2(\R)$.

We consider profiles with polar-coordinate representation $(a,b)$ of the form
\beq a = \psi^\Lambda + u, \quad b= \beta^\Lambda + w,\label{DefineAB}\eeq 
where $u,w  \in H^2(\R)$. 
These satisfy the boundary conditions \eqref{BC} and include all profiles sufficiently close to the reference profile $(\psi^\Lambda,  \beta^\Lambda )$.  Substituting $\uv{m}(a,b) := (\sin a \cos b, \cos a , \sin a \sin b)$ 
into 
the TW equation \eqref{TWEq}, we obtain a  second-order system for $u$ and $w$ in the form $G_{1} = G_2 = 0$, where
\begin{align}
G_{1}(u,w,V;\Lambda) &:=   \sin a \, b'' +  2 \cos a\, a'b'   + Va' +\alpha V  \sin a\, b'    - F_1^\Lambda(a,b),\nonumber \\
G_2(u,w,V;\Lambda)&:= a'' - \half  \sin 2a\, {b'}^2   + \alpha V a' -V\sin a\, b'   - F_2^\Lambda(a,b),
\end{align}
and
\begin{align}
 F_1^\Lambda(a,b) &
:= -\uv{p} \cdot \nabla_{\uv{m}} U^\Lambda(\uv{m}(a,b)), 
\nonumber \\
  F_2^\Lambda(a,b) &
 := \uv{n} \cdot \nabla_{\uv{m}} U^\Lambda(\uv{m}(a,b)).
\label{eq: F1 and F2}
\end{align}
Here, $\uv{n} := (\cos a \cos b, -\sin a, \cos a \sin b)$ and $\uv{p} := \uv{m} \times \uv{n}$ form an orthogonal basis for the plane normal to $\uv{m}$. 
The boundary conditions \eqref{BC} imply that
\beq \label{eq: F1 = F2 = 0}
F_1^\Lambda(\pola_\pm^\Lambda, \azim_\pm^\Lambda)  = F_2^\Lambda(\pola_\pm^\Lambda, \azim_\pm^\Lambda)  = 0,
\eeq
which expresses the vanishing of torques on $\uv{m}(\xi)$ as $\xi$ approaches $\pm \infty$.

It is straightforward to show that $a',b',a''$ and $b''$ belong to $L^2(\R)$ while  $a,b,a',b'$ belong to $L^\infty(\R)$.  It follows that the terms in $G_1$ and $G_2$ involving derivatives of $a$ and $b$ belong to 
$L^2(\R)$.  From \eqref{ReferenceConfiguration}, it is straightforward to show that $F^\Lambda_1$ and $F^\Lambda_2$ also belong to $L^2(\R)$.  For example, to show that $F^\Lambda_1$ is square-integrable on $(0,\infty)$, we note that for $\xi > 0$, $(a,b)$ can be expressed as $(\pola_+^\Lambda + s, \azim_+^\Lambda + t)$, where 
\beq \label{eq: s and t}
s = (\psi_* - \psi_+^{\Lambda_*}) -  (1- \theta) (\psi_+^\Lambda - \psi_+^{\Lambda_*}) + u, \quad 
t = (\beta_* - \beta_+^{\Lambda_*}) -  (1- \theta) (\beta_+^\Lambda - \beta_+^{\Lambda_*}) + w.
\eeq
It is evident that $s,t \in  L^2((0,\infty))$.  Therefore, since $F_1^\Lambda(\pola_+^\Lambda, \azim_+^\Lambda,)  = 0$, it follows from the Mean Value Theorem and boundedness of derivatives of $F^\Lambda_1$ that $F^\Lambda_1(a,b) = F^\Lambda_1(\pola_+^\Lambda+s(\xi),\azim_+^\Lambda + t(\xi))$ is bounded by $|s(\xi)| + |t(\xi)|$ up to a multiplicative constant.  A similar argument, with $(\pola_+^\Lambda, \azim_+^\Lambda)$ replaced by $(\pola_-^\Lambda, \azim_-^\Lambda)$, shows that  $F^\Lambda_1$ is square-integrable on $(-\infty,0))$, and likewise for $F^\Lambda_2$.

Finally, just as solutions $\uv{m}$ of \eqref{TWEq} are determined up to translations, so too are solutions $(u,w)$ of $G_1 = G_2 = 0$.  
This degeneracy can be lifted by choosing the translate which is closest to the static solution $\uv{m}_*$ in the $L^2(\R)$-norm.  This is equivalent to the 
condition $g=0$,
where
\beq\label{g}
g(u,w,V;\Lambda) := \avg{b, \azim_*'}_{L^2(\R)}.
\eeq

With these considerations, we can formulate the TW equation in terms of a map between Banach spaces.  We define 
 \beq
\label{XYZ}
X=H^2(\R)\times H^2(\R)\times \R, \ \ Y = \R^4 = \{\Lambda\}, \ \  Z= L^2(\R)\times L^2(\R)\times\R,
\eeq
and define

\beq \v{G}: X \times Y  \to Z; \ \ 
(u,w,V; \Lambda) \mapsto (G_1, G_2, g)(u,w,V; \Lambda). \label{Map} 
\eeq
Then $\uv{m}(a,b)$
satisfies \eqref{TWEq} if and only if $\v{G}(u,w,V;\Lambda) = 0$.
In order to prove existence of solutions by applying the Implicit Function Theorem, we shall need the following result:
\begin{proposition}\label{prop: Differentiability}
$\mathbf G$ is continuously Fr\'echet differentiable.
\begin{proof} 
The arguments are standard, but because of the large number of terms contained in the Fr\'echet derivative of $\v{G}$,  we restrict the discussion to a few representative terms.   
 
The functional derivative with respect to $w$ of the first term in $G_1$, namely $T := \sin a \,b''$, is  the linear operator $B: H^2(\R) \rightarrow L^2(\R); \phi \mapsto  \sin a \,\phi''$.  This is clearly bounded, and depends continuously on $u$, $w$ and $\Lambda$.  The functional derivative of $T$ with respect to $u$ is the linear operator $A: \chi \mapsto b'' \cos a \, \chi$.  We note that $b'' \in L^2(\R)$ while $\chi, \cos a \in L^\infty(\R)$, so that $b'' \cos a \, \chi \in L^2(\R)$.   Also, $||\chi||_{L^\infty}$ is bounded by $||\chi||_{H^2}$.  It follows that $B$ is a bounded map from $H^2(\R)$ to $L^2(\R)$ and is continuous in its arguments.  
The  functional derivatives of the remaining terms with respect to $u$ and $w$ are treated similarly; in general, they can be expressed as linear maps $f \mapsto h$, where $h$ is given by the product of an $L^2$-function and an $L^\infty$-function, so that $h \in L^2(\R)$.  Moreover, the $L^2$-norm of $h$ can be bounded by the $H^2$-norm of $f$.

Establishing continuous Fr\'echet differentiability with respect to $\Lambda$ requires the vanishing-torque condition \eqref{eq: F1 = F2 = 0}.  Consider, for example,  the term $F^\Lambda_1$ as given by \eqref{eq: F1 and F2}. It is necessary to show that 
$D_\Lambda F^\Lambda_1(a,b)$ belongs to  $L^2(\R)$ and that, as an element of $L^2(\R)$, $D_\Lambda F^\Lambda_1(a,b)$ depends continuously on $u$, $w$ and $\Lambda$.
We have that
\beq 
\label{eq: D_Lambda F_1}
D_\Lambda F^\Lambda_1= \left.\left(\pd{F^\Lambda_1}{a} D_\Lambda a 
+ \pd{F^\Lambda_1}{b} D_\Lambda b  
+ \pd{F^\Lambda_1}{\Lambda}\right)\right|_{a,b,\Lambda},
\eeq 
where the notation $f|_x$ indicates the evaluation of function $f$ at the point $x$.
Consider first the interval  $\xi \in (0,\infty)$.  From \eqref{eq: s and t},  $(a,b) = (\pola_+^\Lambda + s, \azim_+^\Lambda+t)$, where
$s$, $t$, $D_\Lambda s$ and $D_\Lambda t$ all belong to $L^2((0,\infty))$.
Substituting in \eqref{eq: D_Lambda F_1}, we may write  that
\beq
D_\Lambda F^\Lambda_1 = Q + \left. \left(\pd{F^\Lambda_1}{a} D_\Lambda s  \ + \pd{F^\Lambda_1}{b} D_\Lambda t\right)\right|_{a,b},  
\eeq 
where
\beq
Q = \left.\left( \pd{F^\Lambda_1}{a} D_\Lambda \pola_+^\Lambda  + \pd{F^\Lambda_1}{b} D_\Lambda \azim_+^\Lambda + \pd{F^\Lambda_1}{\Lambda}\right)\right|_{a,b}.
\eeq
Since $\partial F^\Lambda_1/\partial a$ and $\partial F^\Lambda_1/\partial b$ are bounded, the terms $(\partial F^\Lambda_1/\partial a) D_\Lambda s$ and $(\partial F^\Lambda_1/\partial b )D_\Lambda t$ belong to $L^2((0,\infty))$. 
As for $Q$, we note that
\beq
\left. Q \right|_{\pola_+^\Lambda ,\azim_+^\Lambda,\Lambda} = 
\left. \left(\pd{F^\Lambda_1}{a} D_\Lambda \pola_+^\Lambda  + \pd{F^\Lambda_1}{b} D_\Lambda \azim_+^\Lambda + \pd{F^\Lambda_1}{\Lambda}\right)\right|_{\pola_+^\Lambda ,\azim_+^\Lambda,\Lambda} = 0,
\eeq
which follows from differentiating the vanishing-torque condition \eqref{eq: F1 = F2 = 0} with respect to $\Lambda$.  Using the Mean Value Theorem, we may write that
\beq
\pd{F^\Lambda_1}{a}(a,b) = \pd{F^\Lambda_1}{a}(\pola_+^\Lambda ,\azim_+^\Lambda) +  \frac{\partial^2 F^\Lambda_1}{\partial a^2}(\tilde a, \tilde b)
s +   \frac{\partial^2 F^\Lambda_1}{\partial a \partial b}(\hat a, \hat b, \Lambda) t,
\eeq
where  $\pola_+^\Lambda \le  \tilde a, \hat a \le \pola_+^\Lambda + s$ and $ \azim_+^\Lambda \le  \tilde b, \hat b \le \azim_+^\Lambda + t$.
Using similar expressions for the remaining terms in $Q$, we may write that 
\begin{multline}\label{eq: preceding expression}
Q = \left.\brackets{\pdd{F^\Lambda_1}{a}D_\Lambda \pola_+^\Lambda 
+ \frac{\partial^2 F^\Lambda_1}{\partial a \partial b} D_\Lambda \azim_+^\Lambda
+\frac{\partial^2 F^\Lambda_1}{\partial a \partial \Lambda}} \right|_{\tilde a, \tilde b}  s \  + \\ 
+
\left.\brackets{\frac{\partial^2 F^\Lambda_1}{\partial a \partial b} D_\Lambda \pola_+^\Lambda 
+ \pdd{F^\Lambda_1}{b}  D_\Lambda \azim_+^\Lambda
+\frac{\partial^2 F^\Lambda_1}{\partial b \partial \Lambda}} \right|_{\hat a, \hat b} t.
\end{multline}
As $s$ and $t$ belong to $L^2((0,\infty))$ and their coefficients in  \eqref{eq: preceding expression} are bounded, it follows that $Q \in L^2((0,\infty))$.

An analogous argument shows that for $\xi < 0$, $D_\Lambda F^\Lambda_1(a,b) \in L^2((-\infty,0))$.  It follows that 
$D_\Lambda F^\Lambda_1(a,b) \in L^2(\R)$, and is  continuous with respect to $u$, $w$, $\Lambda$. One may extend similar arguments to all other terms, and the claim follows.
 \end{proof}
\end{proposition}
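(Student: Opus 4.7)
My plan is to verify continuous Fr\'echet differentiability componentwise, using two simple tools throughout: the one-dimensional Sobolev embedding $H^2(\R)\hookrightarrow C^1(\R)\cap L^\infty(\R)$, and the product estimate $\|fg\|_{L^2}\le \|f\|_{L^\infty}\|g\|_{L^2}$. Because $u,w\in H^2(\R)$ and because the reference profile $(\pola^\Lambda,\azim^\Lambda)$ is built from $(\pola_*,\azim_*)$ plus a piece supported near infinity via the switching function $\Theta$, the total angles $(a,b)=(\pola^\Lambda+u,\azim^\Lambda+w)$ satisfy $a,b,a',b'\in L^\infty(\R)$ and $a'',b''\in L^2(\R)$. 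Every monomial appearing in $G_1,G_2$ is then a product of an $L^\infty$ factor and an $L^2$ factor (or just an $L^2$ term), which is the structural reason $\v{G}$ takes values in $Z$.

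First I would compute the partial Fr\'echet derivatives $D_u\v{G}$, $D_w\v{G}$ and $D_V\v{G}$. For a representative term such as $\sin a\, b''$ the derivative with respect to $w$ sends $\phi\in H^2(\R)$ to $\sin a\,\phi''\in L^2(\R)$, and with respect to $u$ sends $\chi\in H^2(\R)$ to $\cos a\,b''\,\chi\in L^2(\R)$; in both cases the operator norm is controlled by the $H^2$-norm of the increment, and continuity in $(u,w,V,\Lambda)$ follows from continuity of $\sin,\cos$ together with dominated convergence. The remaining derivative-carrying terms are analogous, $V$-derivatives are immediate since $V$ enters linearly, and $g$ is already linear in $w$ so its derivative is trivial.

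The main obstacle is the derivative with respect to $\Lambda$ of the potential terms $F_1^\Lambda, F_2^\Lambda$. A direct computation for $\xi\in(0,\infty)$ gives
\begin{equation*}
D_\Lambda F_1^\Lambda(a,b) = \pd{F_1^\Lambda}{a}(a,b)\,D_\Lambda a + \pd{F_1^\Lambda}{b}(a,b)\,D_\Lambda b + \pd{F_1^\Lambda}{\Lambda}(a,b),
\end{equation*}
but this is generically only $L^\infty$-bounded, not in $L^2$, because each summand approaches a nonzero limit as $\xi\to\infty$ determined by $(\pola_+^\Lambda,\azim_+^\Lambda)$ together with $D_\Lambda\pola_+^\Lambda,D_\Lambda\azim_+^\Lambda$. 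To force $L^2$-decay I would differentiate the vanishing-torque relation \eqref{eq: F1 = F2 = 0} with respect to $\Lambda$, which is precisely the identity asserting that the sum of these three limiting contributions vanishes at $+\infty$. Subtracting this boundary identity and applying the Mean Value Theorem to each coefficient function then expresses $D_\Lambda F_1^\Lambda(a,b)$ as a bounded multiple of $|s|+|t|$ on $(0,\infty)$, where $s,t$ come from the decomposition \eqref{eq: s and t} and lie in $L^2((0,\infty))$; the analogous argument at $-\infty$ gives $L^2$-control on $(-\infty,0)$, and the same strategy handles $F_2^\Lambda$.

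Once each partial derivative is a bounded linear map into the appropriate target, continuity of the full Fr\'echet derivative in $(u,w,V,\Lambda)$ reduces to continuity of trigonometric expressions together with smooth $\Lambda$-dependence of $(\pola^\Lambda_\pm,\azim^\Lambda_\pm)$ and of the reference profile, and follows by routine dominated-convergence arguments now that the $L^2$-decay at infinity has been secured by the vanishing-torque identity. The only genuinely delicate point in the whole argument is this use of the boundary identity to enforce decay of $D_\Lambda F_j^\Lambda$; everything else is bookkeeping on products of $L^\infty$ and $L^2$ factors.
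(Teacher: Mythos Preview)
Your proposal is correct and follows essentially the same approach as the paper: representative-term analysis of the $(u,w,V)$-derivatives via the $L^\infty\times L^2$ product structure (even using the same example $\sin a\,b''$), and for the $\Lambda$-derivative of $F_j^\Lambda$, the same key move of differentiating the vanishing-torque identity \eqref{eq: F1 = F2 = 0} to kill the nondecaying boundary contribution, followed by a Mean Value Theorem argument to bound the remainder by $|s|+|t|\in L^2$. The only differences are cosmetic: the paper splits $D_\Lambda F_1^\Lambda$ explicitly into a ``$Q$'' piece plus the $D_\Lambda s,\,D_\Lambda t$ piece before running the MVT, whereas you describe the subtraction more informally, but the content is identical.
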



\section{Existence of travelling waves}

The existence of travelling waves is given by Theorem~\ref{TWThm1} in the anisotropy-dominated case, and by Theorem~\ref{TWThm2} in the transverse-field dominated case.  Properties of certain one-dimensional Schr\"odinger operators relevant to the analysis are proved in Lemmas~\ref{LBoundedBelow}, \ref{LNonNegative} and \ref{lem: ker N = 0}. 

\begin{theorem}[Anisotropy-dominated TWs]
\label{TWThm1} For all parameters $\Lambda = (H_1, H_2, H_3, K_2)$ sufficiently close to $\Lambda_W $, there exists  a solution $\uv{m}(\xi;\Lambda)$ of  the TW equation \eqref{TWEq} with velocity $V(\Lambda)$. Both $\uv{m}$ and $V$ are continuously differentiable in $\Lambda$, and  for $\Lambda = \Lambda_W$, $\uv{m}$ coincides with static profile $\uv{m}_W$.  
\end{theorem}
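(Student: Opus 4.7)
The plan is to apply the Implicit Function Theorem to the $C^1$ map $\v{G}: X \times Y \to Z$ defined in \eqref{Map} at the base point $(u,w,V;\Lambda) = (0,0,0;\Lambda_W)$. First one checks that $\v{G}(0,0,0;\Lambda_W) = 0$: for $\Lambda = \Lambda_W$, the reference profile coincides with the static Walker profile $(\pola_W,\azim_W) = (\pi/2,\,2\tan^{-1}e^{-\xi})$, which solves \eqref{TWEq} with $V = 0$ in view of \eqref{V_identity}, while $g$ vanishes trivially at $w=0$.

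The crux is to show that $L := D_{(u,w,V)}\v{G}(0,0,0;\Lambda_W)$ is a topological isomorphism from $X$ onto $Z$. Linearizing $G_1$ and $G_2$ about $a = \pi/2$, $b = \azim_W$, almost all cross-couplings collapse: the factor $\cos a$ kills several terms in $G_1$; the explicit forms $F_1^\Lambda = \tfrac{1}{2}\sin a \sin 2b$ and $F_2^\Lambda = -\sin a\cos a(\cos^2 b + K_2)$ give $\partial_a F_1^{\Lambda_W} = \partial_b F_2^{\Lambda_W} = 0$ at the Walker profile; and both $\sin 2a$ and $\cos a$ vanish at $a = \pi/2$. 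The linearization therefore decouples as
\[
L(u,w,V) \;=\; \bigl(-L_0 w - \alpha V \sech\xi,\; -L_1 u + V\sech\xi,\; -\langle w,\sech\xi\rangle_{L^2}\bigr),
\]
where $L_0 := -\partial_\xi^2 + 1 - 2\sech^2\xi$ and $L_1 := -\partial_\xi^2 + (1+K_2) - 2\sech^2\xi$ are the relevant P\"oschl--Teller Schr\"odinger operators on $H^2(\R)$.

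The spectral facts needed to invert $L$ are precisely those supplied by Lemmas~\ref{LBoundedBelow}, \ref{LNonNegative} and \ref{lem: ker N = 0}: $L_0$ is self-adjoint, $L_0 \geq 0$, and $\ker L_0 = \operatorname{span}\{\sech\xi\}$, while $L_1 \geq K_2 > 0$ is strictly positive and hence $L_1:H^2(\R)\to L^2(\R)$ is an isomorphism. To solve $L(u,w,V) = (f_1,f_2,f_3)$ for data in $Z$, the Fredholm solvability of $L_0 w = -f_1 - \alpha V\sech\xi$ against the kernel direction $\sech\xi$ uniquely determines $V = -\langle f_1,\sech\xi\rangle/(\alpha\|\sech\xi\|_{L^2}^2)$; this is where $\alpha > 0$ enters essentially. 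With $V$ fixed, $w$ is determined modulo $\sech\xi$, and the third equation $\langle w,\sech\xi\rangle = -f_3$ fixes the remaining multiple. Finally $u = L_1^{-1}(-f_2 + V\sech\xi) \in H^2(\R)$, and standard elliptic bounds convert this bijection into a continuous inverse.

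The IFT then produces a $C^1$ curve $(u(\Lambda),w(\Lambda),V(\Lambda))$ of zeros of $\v{G}$ on a neighbourhood of $\Lambda_W$, and $\uv{m}(\xi;\Lambda) := \uv{m}\bigl(\pola^\Lambda(\xi) + u(\Lambda)(\xi),\,\azim^\Lambda(\xi) + w(\Lambda)(\xi)\bigr)$ is the asserted TW, reducing to $\uv{m}_W$ when $\Lambda = \Lambda_W$. The main obstacle is the spectral analysis of $L_0$: translation invariance of the TW equation forces the Goldstone mode $\sech\xi = -\azim_W'$ into $\ker L_0$, and this single degeneracy is removed simultaneously by treating $V$ as a Lagrange multiplier for the Fredholm obstruction and by imposing the orthogonality constraint $g = 0$. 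The hypothesis $K_2 > 0$ is exactly what opens a spectral gap in the $u$-sector and forbids a parallel Goldstone mode there, which is why the parameter regime of \eqref{Anisotropy} is restricted.
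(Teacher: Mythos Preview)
Your proposal is correct and follows the paper's overall strategy: apply the Implicit Function Theorem to $\v{G}$ at $(0,0,0;\Lambda_W)$, compute the linearisation (your $L_0$, $L_1$ are exactly the paper's $L$ and $L+K_2$, with $\beta_W'=-\sech\xi$), and invoke the spectral lemmas to invert it. One small slip: Lemma~\ref{lem: ker N = 0} concerns the operator $N$ from the transverse-field case and plays no role here; only Lemmas~\ref{LBoundedBelow} and~\ref{LNonNegative} are needed.

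The only substantive difference is how invertibility of $\Dop_W$ is established. The paper proceeds in two steps: (i) it shows $\Dop_W$ is bounded below by decomposing $f_j=f_j^\perp+c_j\beta_W'$ and analysing the resulting quadratic form in $(c_1,c_2,\mu)$; (ii) it shows $\Dop_W$ is onto by a duality argument, proving $R(\Dop_W)^\perp=0$ via self-adjointness of $L$. You instead construct the inverse explicitly: the Fredholm condition for $L_0 w=\ldots$ pins down $V$ (this is where $\alpha>0$ is used, as in the paper's positive-definiteness of the quadratic form), the gauge condition fixes the kernel component of $w$, and $L_1^{-1}$ gives $u$. Both routes rest on the same spectral input and are equally rigorous; your construction is more transparent about \emph{why} the extra unknown $V$ and the constraint $g$ together cure the translational degeneracy, while the paper's decomposition makes the lower bound $\|\Dop_W x\|_Z\ge C\|x\|_X$ explicit without solving anything. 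Your closing phrase ``standard elliptic bounds'' is doing real work---it is precisely Lemma~\ref{LBoundedBelow} that supplies the $H^2$ control on the constructed $w$ and $u$, so it would be cleaner to cite it there.
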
 

\begin{proof}
We recall the Implicit Function Theorem (see e.g. \cite{Hunter}): Given Banach spaces $X, Y, Z$ and a continuously Fr\'echet differentiable map $\v{G}:X \times Y \to Z$ such that  
$\v{G}(x_0,y_0)=0$, if the linear operator $D_x\v{G}(x_0,y_0): X\rightarrow Z$ has bounded inverse, then  there exist open balls $B_X = B_r(x_0)\subset X$ and $B_Y = B_s(y_0)\subset Y$ such that for all $y\in B_Y$, there exists a unique $x\in B_X$ satisfying $\v{G}(x,y)=0$. Moreover, the implicit function $f: B_Y\rightarrow B_X$ defined by $\v{G}(f(y),y) = 0$  is continuously differentiable on $B_Y$.

We take $X, Y, Z$ as in \eqref{XYZ} and  $\v{G}$ as in \eqref{Map}, with $\Lambda_* = \Lambda_W = (0,0,0,K_2)$.  From Proposition~\ref{prop: Differentiability}, $\v{G}$ is continuously Fr\'echet differentiable. We let  $x_0 := (u_0, w_0, V_0) = (0,0,0)$ and $y_0 = \Lambda_W$, so that $\v{G}(x_0,y_0) = 0$.
We let
\beq
\Dop_W :=D_{(u,w,V)}\v{G}(0,0,0;\Lambda_W): X \to Z.
\eeq 
Then for $(f_1, f_2,\mu )\in X =  H^2(\R) \times H^2(\R)\times \R$,  we have that
\beq
\mathcal{D}(f_1,f_2,\mu) = (-Lf_2 + \alpha \mu \azim_W',  - (L+K_2)f_1 - \mu \azim_W', \avg{\azim_W',f_2}),
\label{DOperator}
\eeq
where  $L$ is the Schr\"odinger operator given by
\beq
L:= - \ddbydd{}{\xi} + W(\xi), \qquad W(\xi)= \cos 2 \azim_W =  \frac{\azim_W'''}{\azim_W'} = 1 - 2\sech^2 \xi.
\label{LDefinition}
\eeq
Clearly
\beq \label{eq: L beta' = 0}
L\, \azim_W' = 0, 
\eeq
so that $\ker L$ is spanned by $\azim_W'$.  (Note that the kernel of a Schr\"odinger operator is at most one dimensional, since its eigenfunctions  are necessarily nondegenerate.)

First, we show that $\Dop_W$ is bounded away from zero.  
We write $f_j = f_j^\perp + c_j \beta_W'$, where $f_j^\perp$ is orthogonal to $\beta'_W$.  
From \eqref{DOperator} and \eqref{eq: L beta' = 0}, it follows that
\beq \label{eq: quad form}  \norm{\Dop_W(f_1,f_2,\mu)}_{Z} = || (L+K_2)f_1^\perp||^2_{L^2} + || L f_2^\perp||^2_{L^2}  + 
 ((K_2 c_1 + \mu)^2  + \alpha^2 \mu^2 + c_2^2) ||\beta_W'||_{L^2}.\eeq
For $\alpha \neq 0$, the quantity$(K_2 c_1 + \mu)^2 + \alpha^2 \mu^2$, regarded as a quadratic form  in $c_1$  and $\mu$  is positive definite, so that there exists $C > 0$ such that
\[ ||\beta_W'||_{L^2} ((K_2 c_1 + \mu)^2 + c_2^2 + \alpha^2 \mu^2) \ge C \left ( || c_1 \beta_W' ||^2_{H^2}  + || c_2 \beta_W' ||^2_{H^2}+ \mu^2\right).\]

From \eqref{LDefinition}, it is clear that i) $W$ is smooth, ii) $\lim_{\xi \rightarrow \pm \infty} W(\xi) =  1$,  and iii) $\lim_{\xi \rightarrow \pm \infty} W''(\xi) = 0$.
In 
Lemma~\ref{LBoundedBelow}  below, it is shown that if the potential $W$ in a one-dimensional Schr\"odinger operator has these properties, then it is bounded away from zero on the orthogonal complement of its kernel, which for $L$ is the subspace orthogonal to $\azim_W'$.
In Lemma~\ref{LNonNegative} it is shown that $L+K_2$ has trivial kernel for $K_2 >0$.  Together with  Lemma~\ref{LBoundedBelow}, this implies that $L + K_2$ is bounded away from zero.  Therefore, there exists
$C' > 0$ such that
\[ || (L+K_2)f_1^\perp||^2_{L^2} \ge C' || f_1^\perp||^2_{H^2}, \quad ||  L f_2^\perp||^2_{L^2} \ge C' || f_2^\perp||^2_{H^2}.\] 
Taking $C'' = \min (C, C')$, we obtain 
\beq \label{eq: Dop bounded away from zero}
\norm{\Dop_W(f_1,f_2,\mu)}_{Z} \geq C'' \norm{(f_1,f_2,\mu)}_{X}.
\eeq

Next, we establish that $\Dop_W:X\rightarrow Z$ is onto.
The fact that $\Dop_W$ is bounded away from implies that $ \Dop_W$ has closed range $R(\Dop_W)$, so  it suffices to show that $R(\Dop_W)^\perp = 0$. 

Suppose that $(\phi_1, \phi_2, r) \in R(\Dop_W)^\perp \subset Z = L^2(\R)\times L^2(\R) \times \R$, so that
 \beq
\avg{(\phi_1,\phi_2, r), \Dop_W(f_1,f_2,\mu)}_Z = 0\ \   \text{for all  $(f_1,f_2,\mu) \in X$}. \label{Ontocondition}
\eeq

Writing out the terms in \eqref{Ontocondition}, we get that
\beq \label{Ontocondition2}
-\avg{\phi_1,Lf_2 - \alpha\mu \beta_W'}_{L^2} 
- \avg{\phi_2,(L+K_2)f_1 + \mu \beta_W'}_{L^2}  + r\avg{\beta_W', f_2}_{L^2} = 0.
\eeq

Taking  $f_2=0$, $\mu =0$, we have for all $f_1 \in H^2(\R)$ that
$$
\avg{\phi_2,(L+K_2)f_1}_{L^2} = 0.
$$
Regarding $L$ as an unbounded operator on $L^2(\R)$ with domain $D(L)$, the preceding implies that $\phi_2 \in D(L^\dag)$.  Since $L$ is self adjoint,  it follows that
\[ 0 = \avg{\phi_2,(L+K_2)f_1}_{L^2} = \avg{f_1, ( L^\dag+K_2)\phi_2}_{L^2} =  \avg{f_1, ( L+K_2)\phi_2}_{L^2}.\]
Thus, $(L+K_2)\phi_2= 0$.   Lemma~\ref{LNonNegative}  implies that $L+K_2$ has trivial kernel, so that $\phi_2 = 0$.

Next, we take $f_1=0$, $\mu=0$ in \eqref{Ontocondition2}  to get that
$$
\avg{\phi_1,Lf_2}_{L^2} = r\avg{\beta_W', f_2}_{L^2}
$$
for all $f_2\in H^2(\R)$.  Arguing as above, it follows that 
$\phi_1 \in D(L^\dag)$, so that
$ \avg{L\phi_1,f_2}_{L^2} = r\avg{\beta_W', f_2}_{L^2}$ for all $f_2\in H^2(\R)$, which implies that
$$
 L\phi_1 = r\beta_W'.
$$
Taking inner products with $\beta'_W$, we get that $r=0$ and that $\phi_1 \in \ker L = \text{span}\, \{\beta_W'\}$. Finally, taking $f_1=f_2=0$ in \eqref{Ontocondition2},  we see that
\beq
\avg{\phi_1,\beta_W'}_{L^2} =0,
\label{orthcond}
\eeq
which implies that $\phi_1 = 0$.

It follows that $\Dop_W$ is onto, and therefore invertible.  Since $\Dop_W$ is bounded away from zero,  $\Dop_W^{-1}$ is bounded.  The assertions in Theorem~\ref{TWThm1}
then follow   from the Implicit Function Theorem.
\end{proof}

\begin{remark}  The potential $W$ in Eq.~\eqref{LDefinition} is a particular case of the modified  P\"oschl--Teller potential \cite{Morse}, whose spectral properties are known rather explicitly.  We shall not make use of these explicit results, however, but instead  give a self-contained analysis in Lemmas~\ref{LBoundedBelow} and \ref{LNonNegative}. This has the advantage of carrying over to the case of transverse-field-dominated TWs, for which corresponding explicit results for the Schr\"odinger operator that appear there  are not available.
\end{remark}

\begin{lemma}
\label{LBoundedBelow}  Let $L = -\mathrm{d}^2/\mathrm{d}\xi^2 + W(\xi) :H^2(\R)\to L^2(\R)$ be a Schr\"odinger operator.  If i) $W \in C^2(\R)$, ii) $W$ has positive limiting values $W_\pm$ as $\xi\rightarrow \pm \infty$, and iii) $W''(\xi) \rightarrow 0$ as $\xi \rightarrow \pm \infty$, then $L$ is bounded away from zero on the orthogonal complement of $\ker L$.  That is, there exists $C> 0$ such that  
$$
\norm{Lu}_{L^2(\R)} \geq C \norm{u}_{H^2(\R)} \ \ \text{for all $u \in \ker(L)^\perp $}.
$$
\end{lemma}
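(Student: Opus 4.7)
The plan is a contradiction argument based on weak compactness in $H^2(\R)$, exploiting the fact that $W_\pm>0$ forces the essential spectrum of $L$ into the positive reals. I split the argument into two steps: first, establish the $L^2$-coercivity $\|Lu\|_{L^2}\ge C_0\|u\|_{L^2}$ on $(\ker L)^\perp$; then upgrade this to the full $H^2$-estimate by a routine elliptic bound. The content is in the first step.

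For the coercivity, suppose no such $C_0$ exists and pick $u_n\in(\ker L)^\perp$ with $\|u_n\|_{L^2}=1$ and $\|Lu_n\|_{L^2}\to 0$. Continuity of $W$ together with its finite limits at $\pm\infty$ makes $W\in L^\infty(\R)$, so $\|u_n''\|_{L^2}\le \|Lu_n\|_{L^2}+\|W\|_\infty\|u_n\|_{L^2}$ is bounded; combined with the interpolation $\|u_n'\|_{L^2}^2\le\|u_n\|_{L^2}\|u_n''\|_{L^2}$, this shows $\{u_n\}$ is bounded in $H^2$. Extract a subsequence with $u_n\rightharpoonup u_*$ in $H^2$; weak continuity of $L:H^2\to L^2$ gives $Lu_*=0$, so $u_*\in\ker L$, while weak closedness of $(\ker L)^\perp$ gives $u_*\in(\ker L)^\perp$, and hence $u_*=0$. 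The Rellich--Kondrachov embedding $H^2([-R,R])\hookrightarrow L^2([-R,R])$ then yields $u_n\to 0$ strongly in $L^2_{\mathrm{loc}}(\R)$.

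To close the contradiction I exploit the positivity of $W$ at infinity. Since $|\langle Lu_n,u_n\rangle|\le\|Lu_n\|_{L^2}\|u_n\|_{L^2}\to 0$,
\[
\int_\R W u_n^2\,d\xi \;=\; \langle Lu_n,u_n\rangle-\|u_n'\|_{L^2}^2 \;\le\; \langle Lu_n,u_n\rangle \;\longrightarrow\; 0.
\]
Setting $W_{\min}:=\min(W_+,W_-)>0$, hypothesis (ii) furnishes $R>0$ with $W(\xi)\ge W_{\min}/2$ for $|\xi|>R$. Splitting the integral and using $u_n\to 0$ in $L^2([-R,R])$ then forces $\int_{|\xi|>R}u_n^2\,d\xi\to 0$, which together with $\int_{|\xi|\le R}u_n^2\,d\xi\to 0$ contradicts $\|u_n\|_{L^2}=1$. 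The $H^2$-upgrade is immediate: on $(\ker L)^\perp$, $\|u''\|_{L^2}\le\|Lu\|_{L^2}+\|W\|_\infty\|u\|_{L^2}\le (1+\|W\|_\infty/C_0)\|Lu\|_{L^2}$, and $\|u'\|_{L^2}^2\le\|u''\|_{L^2}\|u\|_{L^2}$, so $\|u\|_{H^2}\le C\|Lu\|_{L^2}$.

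The main obstacle is the mass-escape step: because $\ker L$ is generally non-trivial (it is spanned by the translational mode $\azim_W'$ in the Walker context), no pointwise lower bound on $W$ yields the coercivity directly; the argument must combine the global positivity of $W$ at infinity with the already-established triviality of the weak limit. Hypothesis (iii) on $W''$ appears not to be strictly needed for this lemma, but presumably underpins sharper decay statements required in the transverse-field analogue where explicit expressions for $W$ are unavailable.
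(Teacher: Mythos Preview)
Your argument is correct and follows the same compactness--contradiction template as the paper (normalize a violating sequence, pass to a weak limit in $H^2$, conclude the limit is zero, then use Rellich--Kondrachov on a bounded interval), but the closing estimate is genuinely different and in fact more economical. The paper normalizes in $H^2$ and expands $\|Lu_n\|_{L^2}^2$ by integrating the cross term $-2u_n''Wu_n$ by parts, which produces $\int 2W|u_n'|^2 + (W^2 - W'')|u_n|^2$; controlling the tail of this expression is precisely where hypothesis~(iii) on $W''$ is invoked, and the $H^2$ lower bound drops out in one step. You instead normalize in $L^2$, bound $H^2$ a posteriori, and use only the quadratic form $\langle Lu_n,u_n\rangle = \|u_n'\|_{L^2}^2 + \int W u_n^2$ to force $\int W u_n^2 \to 0$, which together with $W\ge W_{\min}/2$ at infinity yields the mass-escape contradiction from hypothesis~(ii) alone; the $H^2$ bound is then recovered by the elementary elliptic upgrade. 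Your remark that hypothesis~(iii) is not actually needed for the lemma as stated is correct: the paper's route trades that extra assumption for avoiding the two-step $L^2$-then-$H^2$ structure.
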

\begin{proof}
Assume to the contrary that there exists a normalized sequence $u_n \in \ker(L)^\perp \cap H^2(\R)$ 
such that $Lu_n \to  0$ strongly in $L^2(\R)$. 
We can  then extract a weakly convergent subsequence (not relabelling), $u_{n} \weak u$ in $H^2(\R)$. As $L$ is bounded, it follows that $Lu_n \weak Lu$ weakly in $L^2(\R)$. 
As the weak and strong limits must coincide, it follows that $Lu=0$, i.e. $u \in \ker L$. Since  $u\in (\ker L)^\perp$ by assumption, we conclude that
 $u=0$, so that $u_n$ converges weakly to zero in $H^2(\R)$. 

Integrating by parts, we can write that
$$
\norm{Lu_n}_{L^2(\R)}^2 = \int_\R \brackets{\abs{u_n''}^2 + 2 W \abs{u_n'}^2 + (W^2-W'')\abs{u_n}^2 }.
$$
Taking  $c = \min (W_+, W_-) > 0$, we may write that 
\beq
\norm{Lu_n}_{L^2(\R)}^2 = \int_\R \brackets{\abs{u_n''}^2 + 2 c \abs{u_n'}^2 +  c^2 \abs{u_n}^2} - J_n,
\label{NormAB}
\eeq
where 
$$
J_n := \int_\R 2(c - W) \abs{u_n'}^2 + (c^2 - W^2 + W'') \abs{u_n}^2.
$$
Given $\epsilon > 0$, we can choose $\ell > 0$ such that $2(c - W) > -\epsilon$
and $c^2 - W^2 + W'' > -\epsilon$ for $|\xi| > \ell$.  Then 
\[ J_n \ge 
-\epsilon + \int_{-\ell}^\ell 2(c - W) \abs{u_n'}^2 + (c^2 - W^2 + W'') \abs{u_n}^2, 
\]
since $||u_n||_{L^2}^2 + ||u'_n||_{L^2}^2 \le 1$.
By the Rellich--Kondrachov compact embedding theorem, the fact that $u_n$ converges weakly to $0$  in $H^2(\R)$ implies that  $u_n$ and $ u_n' $ converge  strongly  to $0$ in $L^2((-\ell,\ell))$.  Therefore, 
$$
\lim_{n\rightarrow \infty} \int_{-\ell}^\ell 2(c - W) \abs{u_n'}^2 + (c^2 - W^2 + W'') \abs{u_n}^2 = 0. 
$$
It follows that  $\lim_{n\rightarrow \infty} J_n \geq -\epsilon$, and since $\epsilon$ is arbitrary, that
$ \lim_{n\rightarrow \infty} J_n \leq 0$.
From  \eqref{NormAB}, we conclude that
\beq
\lim_{n\rightarrow\infty} \norm{Lu_n}_{L^2(\R)}^2 \geq \lim_{n\rightarrow \infty} \int_\R \brackets{\abs{u_n''}^2 + 2 c \abs{u_n'}^2 +  c^2 \abs{u_n}^2} \geq \min\{1,2c,c^2\} > 0.
\eeq 
But this contradicts the fact  that $Lu_n \rightarrow 0$.
\end{proof}

\begin{lemma}\label{LNonNegative}
Let $L$ be the Schr\"odinger operator given by \eqref{LDefinition}.  Then
\[ \avg{\phi, (L+K_2)\phi}_{L^2(\R)} \geq K_2 ||\phi||_{L_2}^2.\]
\end{lemma}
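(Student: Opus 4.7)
The plan is to prove the stronger statement $\langle \phi, L \phi\rangle_{L^2(\R)} \ge 0$ for all $\phi \in H^2(\R)$, from which the lemma follows immediately by adding $K_2 \|\phi\|_{L^2}^2$ to both sides. To do this I would use the factorization (``super-symmetric'') method, which is available precisely because the zero-mode of $L$ has no zeros.

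First I would observe from the formula $\azim_W(\xi) = 2\tan^{-1}(e^{-\xi})$ that $\azim_W'(\xi) = -\sech\xi$, which is strictly negative (in particular, nonvanishing) on all of $\R$. Set $\psi := \azim_W'$. The fact that $\psi$ has no zeros lets me define the first-order operator
\begin{equation}
A := \frac{\mathrm{d}}{\mathrm{d}\xi} - \frac{\psi'}{\psi}, \qquad A^\dagger = -\frac{\mathrm{d}}{\mathrm{d}\xi} - \frac{\psi'}{\psi},
\end{equation}
on smooth compactly supported test functions; both $\psi'/\psi = -\tanh\xi$ and its derivative are bounded, so $A$ extends to a bounded operator $H^2(\R) \to H^1(\R)$.

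Next I would verify by direct computation that $A^\dagger A = L$. Expanding $A^\dagger A \phi$, the cross terms combine to give $[(\psi'/\psi)^2 + (\psi'/\psi)'] \phi = (\psi''/\psi)\phi$, and by \eqref{eq: L beta' = 0} we have $\psi'' = W\psi$, so $\psi''/\psi = W$. Hence $A^\dagger A \phi = -\phi'' + W\phi = L\phi$. Then, for $\phi \in H^2(\R)$, a single integration by parts (whose boundary terms vanish thanks to $\phi, \phi' \in L^2(\R)$ together with boundedness of $\psi'/\psi$) yields
\begin{equation}
\langle \phi, L\phi\rangle_{L^2(\R)} = \langle \phi, A^\dagger A \phi\rangle_{L^2(\R)} = \|A\phi\|_{L^2(\R)}^2 \ge 0.
\end{equation}

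Adding $K_2\langle\phi,\phi\rangle_{L^2(\R)}$ to both sides gives the claimed inequality. The only step that requires care is the integration by parts justifying the identity $\langle\phi, A^\dagger A\phi\rangle = \|A\phi\|^2$ for arbitrary $\phi \in H^2(\R)$ (rather than just for Schwartz $\phi$); this is the main technical point, and it is handled either by noting the boundedness of $\psi'/\psi = -\tanh\xi$ and its derivative (so $A$ and $A^\dagger$ are genuine bounded operators between the relevant Sobolev spaces) or by approximating $\phi$ in $H^2(\R)$ by $C^\infty_c(\R)$ functions and passing to the limit.
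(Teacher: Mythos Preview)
Your proof is correct and is essentially the same as the paper's: the factorization $L = A^\dagger A$ with $A = \mathrm{d}/\mathrm{d}\xi - \psi'/\psi$ is precisely the operator-theoretic form of the paper's substitution $\phi = u\,\azim_W'$, and both arguments reduce to the identity $\langle\phi, L\phi\rangle = \int_{\R} (u')^2(\azim_W')^2 \ge 0$ after a single integration by parts (your $A\phi$ equals $u'\azim_W'$). The only cosmetic difference is that the paper carries out the substitution explicitly rather than naming the first-order operator $A$.
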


\begin{proof}
We take $\phi = u \azim_W'$. Since $|\azim_W'| >  0$, it follows that $u \in H^2_\text{loc}(\R)$. Then
$$
L\phi = -u''\azim_W' - 2 u' \azim_W'',
$$
and hence
$$
\avg{\phi, L\phi}_{L^2} = -\int_\R u \brackets{u'' {\beta_W'}^2 + 2u'\beta_W' \beta_W''} = 
-\int_\R u \left( u' {\beta_W'}^2\right)' = 
\int_\R {u'}^2 {\beta_W'}^2 \ge 0,$$
where  we have used the fact, easily checked, that ${\beta'_W}^2 uu'$ vanishes at $\xi = \pm \infty$.
Therefore, $\avg{\phi, (L+K_2)\phi}_{L^2(\R)} \geq K_2 ||\phi||_{L_2}^2$. \end{proof}

\begin{theorem}[Transverse-field-dominated TWs]\label{TWThm2}
For all parameters $\Lambda = (H_1, H_2, H_3, K_2)$   sufficiently close to $\Lambda_T$, there exists  a solution $\uv{m}(\xi;\Lambda)$ of  the TW equation \eqref{TWEq} with velocity $V(\Lambda)$. Both $\uv{m}$ and $V$ are continuously differentiable  in $\Lambda$, and  for $\Lambda = \Lambda_T$, $\uv{m}$ coincides with the static profile $\uv{m}_T$.  
\end{theorem}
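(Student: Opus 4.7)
I would apply the Implicit Function Theorem to the map $\v{G}$ of \eqref{Map} with $\Lambda_*=\Lambda_T$ and reference point $(u_0,w_0,V_0)=(0,0,0)$. Since $(\pola_T,\azim_T)$ satisfies the static TW equation, $\v{G}(0,0,0;\Lambda_T)=0$, and Proposition~\ref{prop: Differentiability} yields continuous Fr\'echet differentiability of $\v{G}$. All that remains is to prove that the linearisation $\Dop_T:=D_{(u,w,V)}\v{G}(0,0,0;\Lambda_T):X\to Z$ is a bounded bijection.

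The first step is to compute $\Dop_T$ at the static profile $\pola_T=\pi/2$, $V=0$. Because $\cos\pola_T=\pola_T'=0$, many cross terms in $G_1,G_2$ vanish, and differentiating the vanishing-torque identity \eqref{eq: F1 = F2 = 0} shows $\partial_a F_1|_T=\partial_b F_2|_T=0$, leading to the decoupled form
\beq
\Dop_T(f_1,f_2,\mu) = \brackets{-Mf_2+\alpha\mu\,\azim_T',\ -Nf_1-\mu\,\azim_T',\ \avg{\azim_T',f_2}_{L^2(\R)}},
\eeq
where $M$ and $N$ are the Schr\"odinger operators
\beq
M:=-\tfrac{d^2}{d\xi^2}+\cos 2\azim_T+H_3\sin\azim_T,\qquad N:=-\tfrac{d^2}{d\xi^2}-(\azim_T')^2+\cos^2\azim_T+H_3\sin\azim_T.
\eeq
Differentiating the static equation \eqref{StaticZero} yields $M\azim_T'=0$; since a 1D Schr\"odinger kernel is at most one-dimensional, $\ker M=\text{span}\{\azim_T'\}$, while $\ker N=\{0\}$ is the content of Lemma~\ref{lem: ker N = 0}.

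From this point the argument parallels Theorem~\ref{TWThm1}. Using $\sin\azim_T\to H_3$ at both infinities (by \eqref{eq: StaticZero bcs}) and the exponential decay of $\azim_T'$, the potentials of $M$ and $N$ are smooth, have strictly positive limits $1-H_3^2$ and $1$ respectively at $\pm\infty$, and have vanishing second derivatives there, so Lemma~\ref{LBoundedBelow} gives constants with $\|Mg\|_{L^2}\geq C\|g\|_{H^2}$ on $\{g\perp\azim_T'\}$ and $\|Nh\|_{L^2}\geq C\|h\|_{H^2}$ on all of $H^2(\R)$. Decomposing $f_2=f_2^\perp+c_2\azim_T'$, we note that $Mf_2=Mf_2^\perp$ is orthogonal to $\azim_T'$, so the first component of $\Dop_T$ contributes $\|Mf_2^\perp\|^2+\alpha^2\mu^2\|\azim_T'\|^2$ to the $Z$-norm squared. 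A Young-type estimate on $\|Nf_1+\mu\azim_T'\|^2$ trades it for a positive multiple of $\|Nf_1\|^2$ minus a multiple of $\mu^2\|\azim_T'\|^2$, the latter being absorbed into the $\alpha^2\mu^2\|\azim_T'\|^2$ contribution (using $\alpha>0$); combined with the $c_2^2\|\azim_T'\|^4$ piece from the third component, this produces $\|\Dop_T(f_1,f_2,\mu)\|_Z\geq C'\|(f_1,f_2,\mu)\|_X$. Surjectivity then follows by the same adjoint computation as in Theorem~\ref{TWThm1}: for $(\phi_1,\phi_2,r)\in R(\Dop_T)^\perp$, taking $f_2=\mu=0$ yields $N\phi_2=0$ and hence $\phi_2=0$; taking $f_1=\mu=0$ gives $M\phi_1=r\azim_T'$, which pairs with $\azim_T'$ to force $r=0$ and $\phi_1\in\text{span}\{\azim_T'\}$; taking $f_1=f_2=0$ forces $\avg{\phi_1,\azim_T'}_{L^2}=0$ (using $\alpha>0$), so $\phi_1=0$. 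The Implicit Function Theorem then delivers the conclusion.

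The main obstacle is the spectral analysis of $N$. In the Walker case the corresponding operator is $L+K_2$, whose nonnegativity is immediate from Lemma~\ref{LNonNegative}; here, however, the $-(\azim_T')^2$ contribution makes the potential of $N$ sign-indefinite when $H_3$ is small (near the centre of the wall $\sin\azim_T$ can be close to $1$, making $\cos^2\azim_T-(\azim_T')^2$ negative), so naive positivity is unavailable and one must establish $\ker N=\{0\}$ through the more delicate argument recorded in Lemma~\ref{lem: ker N = 0}.
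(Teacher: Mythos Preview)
Your proof is correct and follows the same route as the paper: compute $\Dop_T$ in terms of the Schr\"odinger operators $M$ and $N$ (your explicit potentials agree with the paper's $\azim_T'''/\azim_T'$ and $(\cos\azim_T)''/\cos\azim_T+1$ after using $\azim_T'=H_3-\sin\azim_T$), invoke Lemmas~\ref{LBoundedBelow} and~\ref{lem: ker N = 0}, and then port the bounded-below and surjectivity arguments from Theorem~\ref{TWThm1}.

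Two small remarks. First, the decoupling $\partial_a F_1|_T=\partial_b F_2|_T=0$ does not come from differentiating the vanishing-torque identity \eqref{eq: F1 = F2 = 0} (that concerns only the boundary values $(\pola_\pm^\Lambda,\azim_\pm^\Lambda)$); it holds along the whole profile simply because both quantities carry an overall factor $\cos a$, which vanishes at $a=\pi/2$. Second, your Young-type absorption of the cross term in $\|Nf_1+\mu\azim_T'\|^2$ into the $\alpha^2\mu^2\|\azim_T'\|^2$ contribution is actually the right substitute for the paper's ``carries over directly'': the quadratic-form step in Theorem~\ref{TWThm1} relied on $(L+K_2)\azim_W'=K_2\azim_W'$, whereas here $N\azim_T'=H_3(2\sin\azim_T-H_3)\azim_T'$ is not a constant multiple of $\azim_T'$, so that literal decomposition is unavailable and your estimate fills the gap.
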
 

\begin{proof}
When $\Lambda_W$ is replaced by $\Lambda_T$, 
$\Dop_T :=D_{(u,w,V)}\v{G}(0,0,0;\Lambda_T)$ is given by
\beq
\label{DopField}
\Dop_T(f_1,f_2,\mu) = (-M f_2 +\alpha\mu\azim_T' ,- N f_1 - \mu \azim_T', \avg{\azim_T',f_2}).
\eeq
Here, $\azim_T(\xi) \in H^2(\R)$  satisfies (cf  \eqref{StaticZero} and \eqref{eq: StaticZero bcs})
\beq \label{eq: beta_T'}  \azim_T' = H_3 - \sin \azim_T, \quad \lim_{\xi\rightarrow-\infty} \azim_T = \pi - \sin^{-1}(H_3), \ \  \lim_{\xi\rightarrow\infty} \azim_T =\sin^{-1}(H_3),\eeq
and $M$, $N$ are Schr\"odinger operators given by 
\beq \label{eq: MN}
M:=  -\ddbydd{}{\xi} + \frac{\azim_T'''}{\azim_T'}, \qquad  N:= -\ddbydd{}{\xi} + \frac{(\cos \azim_T)''}{\cos \azim_T} + 1.
\eeq
Clearly 
\beq \label{eq: M beta = 0} M\, \beta'_T = 0,\eeq so that $\beta'_T$ spans  $\ker M$.

Comparing \eqref{DOperator} and \eqref{DopField}, we see that $\Dop_T$ is obtained from $\Dop_W$ by making the replacements $\beta'_W \rightarrow \beta_T'$, $L \rightarrow M$ and $L+K_2 \rightarrow N$. The proof of Theorem~\ref{TWThm1}  carries over directly provided we  show that $M$ is bounded away from zero on $\ker(M)^\perp$, 
and that $N$ is bounded away from zero.

Firstly, Lemma~\ref{LBoundedBelow} implies directly that $M = -\mathrm{d}^2/\mathrm{d}\xi^2 + W$ is bounded away from zero on $\ker(M)^\perp$.
Secondly, with  $W$ taken  to be $(\cos \azim_T)''/ \cos \azim_T + 1$, the fact that $N = -\mathrm{d}^2/\mathrm{d}\xi^2 + W$ is bounded away from zero follows from 
 Lemma~\ref{LBoundedBelow} (as  \eqref{eq: beta_T'} implies that  $W$ is smooth, $\lim_{\xi \rightarrow \pm \infty} W(\xi) = 1$,  and $\lim_{\xi \rightarrow \pm \infty} W''(\xi) = 0$), combined with Lemma~\ref{lem: ker N = 0} below, which implies that $N$ has trivial kernel.\end{proof}

\begin{lemma}\label{lem: ker N = 0}
Let $N$ be the Schr\"odinger operator given by~\eqref{eq: MN}.  Then
\[\avg{\phi, N\phi}_{L^2(\R)} \geq H_3^2 ||\phi||_{L_2}^2.\]
\end{lemma}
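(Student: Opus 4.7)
The plan is to reduce the inequality for $N$ to the corresponding nonnegativity statement for the operator $M$ from \eqref{eq: MN}, exploiting that $\beta_T'$ spans $\ker M$ by \eqref{eq: M beta = 0} and does not vanish on $\R$. This is exactly the setting of Lemma~\ref{LNonNegative}, which, applied verbatim to $M$ in place of $L$, yields $M \geq 0$. The remainder $N - M$ should turn out to be a multiplication operator that is bounded below by $H_3^2$.

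First I would compute $N - M$ as a multiplication operator. Using $\beta_T' = H_3 - \sin\beta_T$, hence $\beta_T'' = -\cos\beta_T\,\beta_T'$ and $\beta_T''' = \sin\beta_T\,(\beta_T')^2 + \cos^2\beta_T\,\beta_T'$, a direct calculation gives
\[ \frac{\beta_T'''}{\beta_T'} = H_3 \sin\beta_T + 1 - 2\sin^2\beta_T, \qquad \frac{(\cos\beta_T)''}{\cos\beta_T} + 1 = 3H_3 \sin\beta_T + 1 - 2\sin^2\beta_T - H_3^2, \]
so that $N - M = 2H_3\sin\beta_T - H_3^2$ pointwise. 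For the nonnegativity of $M$, I would follow Lemma~\ref{LNonNegative} and substitute $\phi = u\,\beta_T'$ (legitimate since $\beta_T' < 0$ on all of $\R$); using $M\beta_T' = 0$ one gets $M\phi = -u''\beta_T' - 2u'\beta_T''$, and integration by parts gives $\langle \phi, M\phi\rangle = \int_\R (u')^2 (\beta_T')^2 \geq 0$, with the boundary term $[\phi\phi' - \phi^2\,\beta_T''/\beta_T']_{-\infty}^\infty$ vanishing because $\phi, \phi' \to 0$ at $\pm \infty$ (Sobolev embedding) while $\beta_T''/\beta_T' = -\cos\beta_T$ is bounded. Assuming $H_3 \geq 0$ without loss of generality (by the rotational symmetry about the easy axis used already to reduce to $H_2 = 0$), the stated bound $\sin\beta_T \geq H_3$ implies $2H_3\sin\beta_T - H_3^2 \geq H_3^2$, so combining the two ingredients
\[ \langle \phi, N\phi\rangle = \langle \phi, M\phi\rangle + \int_\R (2H_3\sin\beta_T - H_3^2)\,\phi^2 \geq H_3^2 \|\phi\|_{L^2(\R)}^2. \]

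The main subtlety, as in Lemma~\ref{LNonNegative}, is in justifying the substitution $\phi = u\,\beta_T'$: since $\beta_T'$ decays exponentially at $\pm\infty$, $u = \phi/\beta_T'$ need not lie in $H^2(\R)$, so one must check that the quantities in the integration by parts are finite and the boundary contributions vanish for every $\phi \in H^2(\R)$. The computation in step one is lengthy but entirely mechanical once the formulas for $\beta_T''$ and $\beta_T'''$ are recorded, and the final pointwise inequality for $N - M$ is immediate from the hypothesis $\sin\beta_T \geq H_3$.
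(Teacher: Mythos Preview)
Your proof is correct and is essentially the paper's argument, organised slightly differently: you split $N = M + (2H_3\sin\beta_T - H_3^2)$ and invoke Lemma~\ref{LNonNegative} for $M$, whereas the paper computes $N\beta_T' = H_3(2\sin\beta_T - H_3)\beta_T'$ directly and then applies the same substitution $\phi = u\beta_T'$ and integration by parts. The calculations, the boundary-term justification, and the final use of $\sin\beta_T \ge H_3$ are identical in substance.
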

\begin{proof}
From \eqref{DOperator} and \eqref{DopField}, we get that
\[ N = -\frac{d^2}{ d\xi^2} + \cos 2\beta_T + 3H_3\sin \beta_T - H_3^2, \quad (\beta_T')'' = \left(\cos 2\beta_T + H_3 \sin \beta_T\right)\beta_T'.\]

Therefore, 
\[ N\beta_T' = H_3(2\sin \beta_T - H_3) \beta_T'.\]
We let 
$\phi = u \beta_T'$.
Since $|\beta_T'| < 0$, it follows that $u \in H^2_{\text{loc}}(\R)$.  Calculation gives
\[ N\phi = -u'' \beta_T' - 2u'\beta_T'' + H_3 (2\sin \beta_T - H_3) \phi,\]
and hence
\begin{multline*} \ip{\phi}{N\phi}_{L^2(\R)} = -\int_\R u \left( u'' {\beta_T'^2} + 2u'\beta_T' \beta_T''\right) + H_3 \int_\R (2\sin \beta_T - H_3) \phi^2 = \\
=\int_\R {u'}^2 {\beta_T'^2} +  H_3 \int_\R (2\sin \beta_T - H_3) \phi^2 \geq H_3^2 ||\phi||^2_{L^2},\end{multline*}
where in the second equality  we have used the fact (easily checked)  that ${\beta'_T}^2 uu'$ vanishes at $\xi = \pm \infty$, while 
in the last equality we have  
$\sin \beta_T\geq H_3$.
\end{proof}

\section{Conclusion}
We have proven existence of travelling-wave solutions to the Landau--Lifshitz--Gilbert equation in a thin ferromagnetic nanowire. This was accomplished by virtue of the Implicit Function Theorem, which provides local existence and uniqueness (up to translation) of travelling-wave profiles close to static energy minimizers satisfying tail-to-tail boundary conditions (thus representing propagating domain walls), with velocities in a neighbourhood of 0. We proved the existence of solutions of this type for values of the physical parameters close to two ranges in the parameter space: the first with zero applied field $\v{H}_a = 0$ and nonzero transverse anisotropy $K_2>0$, and the second with nonvanishing transverse applied field  $\v{H}_a=H_2 \yhat +H_3\zhat$, $0<H_2^2+H_3^2<1$, and vanishing  hard axis anistropy $K_2=0$.


\section*{Acknowledgments} RGL would like to acknowledge the support of an EPSRC doctoral training award granted by University of Bristol from 2010-2014. JMR and VS thank the EPSRC for support under grant  EP/K02390X/1.

\label{lastpage}
\end{document}